\documentclass[11pt,oneside]{amsart}
\usepackage{amsmath, amssymb, amsthm}

\newtheorem{theorem}{Theorem}
\newtheorem{lemma}[theorem]{Lemma}

\title{Every Finite Group Admits a Just Finite Presentation}
\author{Marc Lackenby}
\address{Mathematical Institute, University of Oxford, \newline Woodstock Road, Oxford OX2 6GG, United Kingdom}
\thanks{The author was partially supported by EPSRC grant EP/Y004256/1. For the purpose of open access, the author has applied a CC BY public copyright licence to any author accepted manuscript arising from this submission. Significant assistance was provided by the Google DeepMind AI co-mathematician tool. See the Methodology section for more details.}

\begin{document}

\maketitle

\begin{abstract}
A finite presentation $\langle X \, | \, R \rangle$ of a finite group $G$ is called ``just finite'' if removing any relation from $R$ results in a presentation for an infinite group. It has been an open question (Kourovka Notebook, Problem 21.10) whether every finite group admits such a presentation. We resolve this conjecture in the affirmative. 
%The proof proceeds by splitting into two cases: cyclic groups, which trivially admit balanced just finite presentations, and non-cyclic groups, for which we provide an explicit construction utilizing B.H. Neumann's presentation of the trivial group. 
\end{abstract}

\section{Introduction}
Inspired by the extensive theory of just infinite groups \cite{McCarthy1, McCarthy2, Wilson, Wilson2000, Grigorchuk2000, Reid:Structure, Reid2010}, Barnea introduced a notion that is, in some sense, dual \cite{Barnea}. He defined a \emph{just finite} presentation of a finite group $G$, which is a presentation $\langle X \, | \, R \rangle$ for the group with the property that, for every $r \in R$, the group $\langle X \, | \, R \setminus \{r\} \rangle$ is infinite. In  Problem 21.10 of the Kourovka Notebook \cite{Kourovka}, Barnea asked whether every finite group admits a just finite presentation. The goal of this note to prove that this is indeed the case.

\begin{theorem}
\label{Thm:Main}
Every finite group admits a finite, just finite presentation.
\end{theorem}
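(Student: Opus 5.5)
The cyclic case is immediate. If $G=\mathbb{Z}/n$, the presentation $\langle x \mid x^n\rangle$ is just finite, since deleting its only relation leaves $\mathbb{Z}$. For arbitrary $G$ I would first extract a ``minimal finite'' subpresentation and then repair it with a gadget of Neumann type.

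\textbf{Step 1 (minimal finite core).} Fix any finite presentation $\langle X\mid R\rangle$ of $G$. Choose $R'\subseteq R$ inclusion-minimal such that $H=\langle X\mid R'\rangle$ is finite. By minimality, deleting any $r\in R'$ gives an infinite group, so $\langle X\mid R'\rangle$ is already just finite. The catch is that it presents a finite group $H$ surjecting onto $G$, with kernel $N$ normally generated by finitely many words $w_1,\dots,w_k$ (for instance the elements of $R\setminus R'$).

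\textbf{Step 2 (Neumann gadget).} I would kill $N$ with relations each of which is individually responsible for infinitude. Add two new generators $a,b$ and the relations
\[
u_1=a^{-1}bab^{-2}\,W,\qquad u_2=b^{-1}aba^{-2},
\]
where $W$ is a product of conjugates of the $w_j$ that normally generates $N$. With $W=1$ the group $\langle a,b\mid u_1,u_2\rangle$ is trivial. This is a standard balanced presentation of the trivial group: from $a^{-1}ba=b^2$ and $b^{-1}ab=a^2$ one gets $a=b=1$. I then need to show two things.
\begin{enumerate}
\item $\langle X,a,b\mid R',u_1,u_2\rangle$ presents $G$. Using the $u_i$, one shows $W$ becomes central and then trivial, forcing $a=b=1$; I expect this to be a routine but careful word calculation.
\item Deleting $u_1$ or $u_2$ gives an infinite group. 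Map $X\mapsto 0$, $a\mapsto 1$, $b\mapsto 0$ in $\mathbb{Z}$ (or symmetrically $a\mapsto 0$, $b\mapsto 1$). Every relation of $R'$ and of $W$ involves only $X$, so it maps to $0$, and the surviving $u_i$ has exponent sum $0$. Hence the group surjects onto $\mathbb{Z}$.
\end{enumerate}

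\textbf{Step 3 (main obstacle: deleting $r\in R'$).} Deleting $r\in R'$ yields $(K*F(a,b))/\langle\!\langle u_1,u_2\rangle\!\rangle$, where $K=\langle X\mid R'\setminus\{r\}\rangle$ is infinite. The gadget could, however, collapse this group. If I cannot prove directly that it stays infinite, I would iterate rather than prove it. Apply Step 1 to the combined presentation $\langle X,a,b\mid R',u_1,u_2\rangle$ to obtain a minimal finite subset $S$. Step 2(ii) shows $u_1,u_2\in S$, because deleting either one gives an infinite group. The group $\langle X,a,b\mid S\rangle$ is a finite group $H_1$ surjecting onto $G$. I would then argue that $H_1$ is a proper quotient of $H$ unless $H_1=G$; alternatively, one can arrange that $|H_1|<|H|$ by a good choice of $W$. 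Repeating the construction then terminates, by induction on $|H|/|G|$, at a just finite presentation of $G$.

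The delicate point, and the step I expect to be hardest, is controlling the group presented by the minimal subset $S$. In particular I need to show that it still kills $N$, or at least strictly shrinks the kernel, when some relations of $R'$ are discarded. If that fails, I would choose $W$ and the gadget exponents so that the $u_i$ are trivial in $H$ modulo $N$ but act as small-cancellation or HNN-type relators over any infinite $K$. The aim is to show directly, via an amalgam or HNN decomposition, that $(K*F(a,b))/\langle\!\langle u_1,u_2\rangle\!\rangle$ contains $K$ or maps onto an infinite group.
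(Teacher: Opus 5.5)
Your cyclic case is fine, but the general argument breaks at Step 2(i), which is false.

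Once $u_2$ holds you have $a^{-1}b^{-1}ab=a$, and then $u_1$ is equivalent to $W=b^2ab^{-1}$. So $\langle X,a,b\mid R',u_1,u_2\rangle$ is the quotient of $H*\mathrm{BS}(1,2)$, where $\mathrm{BS}(1,2)=\langle a,b\mid b^{-1}ab=a^2\rangle$, obtained by identifying $W$ with $b^2ab^{-1}$. That element has infinite order, since it maps to a generator of the abelianisation. So your gadget amalgamates $W$ rather than killing it, and nothing makes $W$ central or trivial.

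Concretely, take $G=\mathbb{Z}_2\times\mathbb{Z}_2$, the inclusion-minimal finite core $R'=\{x^2,y^2,(xy)^4\}$ (so $H=D_8$), and $W=(xy)^2$. Send $x,y$ identically into $D_8$, $a\mapsto(123)$ and $b\mapsto(12)$. Then $b^2ab^{-1}$ is an involution in $S_3$, and you get a surjection onto the infinite group $D_8*_{\mathbb{Z}_2}S_3$. Neumann's lemma kills only an element that is itself one of the two Neumann elements. Appending $W$ to one relator of a balanced presentation of the trivial group does not do this.

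(A minor point: in Step 2(ii) the two maps to $\mathbb{Z}$ are not interchangeable. The map $a\mapsto 1$, $b\mapsto 0$ does not kill $u_2$, so each deletion needs its own map.)

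Even with a genuinely killing gadget such as $W^{-1}bW=b^2$, $b^{-1}Wb=W^2$, Step 3 is unresolved. Deleting $r\in R'$ then gives $\langle X\mid R'\setminus\{r\},W\rangle$, which can be finite: in the example, deleting $(xy)^4$ leaves $\mathbb{Z}_2\times\mathbb{Z}_2$. Your fallback iteration also has no decreasing measure. The group $H_1=\langle X\mid S_0,W\rangle$ with $S_0\subsetneq R'$ need not be a quotient of $H$, nor smaller than it.

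The missing idea is to drop the finite core altogether. The paper takes an irredundant presentation $\langle X\mid R\rangle$ of non-cyclic $G$ and replaces \emph{every} relator $r$ by the pair $r^{-1}b_rr=b_r^2$, $b_r^{-1}rb_r=r^2$. This makes $r$ itself one of the Neumann elements, so the new presentation still presents $G$.
\begin{itemize}
\item Deleting the first relation of a pair leaves a group mapping onto $\mathbb{Z}$ via $b_r$.
\item Deleting the second gives $H_r*_{\langle r\rangle}\langle x,b\mid x^{-1}bx=b^2,\,x^k\rangle$ (omit $x^k$ if $k=\infty$). Here $k>1$ is the order of $r$ in $H_r=\langle X\mid R\setminus\{r\}\rangle$, by irredundance.
\item This amalgam is non-trivial, hence infinite. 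The right factor has index at least $3$ over $\langle x\rangle$, and $H_r\neq\langle r\rangle$ because $G$ is non-cyclic.
\end{itemize}
So the amalgamation your gadget produces is exactly what the paper exploits, but only in the group obtained after a deletion. Because every original relator is replaced, no bare relator of $R$ is left to delete, which removes the obstacle of your Step 3.
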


This is actually a consequence of a stronger result.
Recall that a group $G$ has \emph{Property (FA)} if any action of $G$ on a tree has a fixed point. It is a fundamental theorem of Serre \cite{Serre} that a finitely generated group has
Property (FA) if and only if it is neither a non-trivial amalgamated free product nor an HNN extension. We say that a presentation $\langle X \, | \, R \rangle$ for a group with Property (FA) is \emph{just-(FA)} if, for every $r \in R$, the group $\langle X \, | \, R \setminus \{r\} \rangle$ does not have Property (FA).

\begin{theorem}
\label{Thm:JustFA}
Every finitely presented group with Property (FA) admits a finite just-(FA) presentation.
\end{theorem}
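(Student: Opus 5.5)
The plan is to take an arbitrary finite presentation $\langle X \mid r_1,\dots,r_n\rangle$ of the given group $G$ (which has Property (FA)) and rebuild it into one where every relation is essential for (FA). A naive greedy deletion does not work. Deleting relations while the group stays (FA) ends at a presentation in which every relation is (FA)-essential, but it generally presents a proper cover of $G$, not $G$ itself. So I would instead make each relation's role rigid by tying it to auxiliary generators. Throughout, non-(FA) will be certified in one of two ways: by a surjection onto $\mathbb{Z}$, or by exhibiting the group as a non-trivial free product, where both factors are non-trivial. By Serre's theorem either certificate rules out (FA).

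\textbf{Step 1: link the old relations to new generators.} Introduce new generators $y_1,\dots,y_n$ and the linking relations $y_i^{-1} r_i$, so that $y_i$ is forced to equal $r_i$. Then add a finite set $Q$ of relations in the $y_j$ alone which presents the trivial group on $y_1,\dots,y_n$. The resulting presentation
\[ \langle X, y_1,\dots,y_n \mid y_1^{-1}r_1,\dots,y_n^{-1}r_n,\ Q \rangle \]
presents $G$, since $Q$ kills every $y_j$ and hence every $r_i$. For $Q$ I would use a B.H.~Neumann style presentation of the trivial group, built from relations of the shape $y_{j+1} = y_j^{-1} y_{j+1}^{2} y_j$ (indices cyclic), possibly with extra generators. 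Such a $Q$ has the feature that deleting any single relation leaves a group with a non-trivial amalgam or HNN structure, for instance via a surjection to $\mathbb{Z}$ or to a Baumslag--Solitar type quotient.

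\textbf{Step 2: verify just-(FA) relation by relation.} There are two kinds of relations to check.

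\emph{Deleting a linking relation $y_i^{-1}r_i$.} The generator $y_i$ is no longer identified with $r_i$. After Tietze moves eliminating the other $y_j$, the group becomes $\langle X, y_i \mid r_j\ (j\neq i),\ Q(r_1,\dots,y_i,\dots,r_n)\rangle$. I want to show this is not (FA). The idea is to arrange $Q$ so that, once $y_i$ is unconstrained by $X$, the relations of $Q$ still have an HNN-type structure in which $y_i$ (or a companion generator) is a stable letter with zero exponent sum in every relation. That would give a surjection onto $\mathbb{Z}$ sending $X \to 1$ and $y_i \mapsto 1$. This needs every relation in $Q$ to have zero exponent sum in $y_i$, or, more robustly, an extra generator $t_i$ per index that appears only in the relations involving $y_i$.

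\emph{Deleting a relation $q \in Q$.} Here I would try to map onto a non-(FA) group directly. A natural choice is the homomorphism killing $X$: it sends each $y_j$ to $1$ through the linking relations, so it is useless. I would therefore modify the linking relations to $y_i^{-1} r_i\, w_i$, where each $w_i$ is a word in fresh generators. The fresh generators let the $y$-part project onto $\langle y \mid Q\setminus\{q\}\rangle$ independently of $X$, and that group is non-(FA) by the choice of $Q$.

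\textbf{Main obstacle.} The hard step is designing $Q$ together with the modified linking relations so that three conditions hold at once:
\begin{enumerate}
\item the full presentation still presents $G$;
\item deleting any $q\in Q$ yields a group surjecting onto the non-(FA) group $\langle y\mid Q\setminus\{q\}\rangle$;
\item deleting any linking relation frees a generator enough to produce a surjection to $\mathbb{Z}$.
\end{enumerate}
Condition 1 and conditions 2--3 pull in opposite directions. My first attempt would be a cyclic Neumann-type presentation where each relation involves exactly two consecutive generators in conjugate form. For such a presentation, deleting any one relation turns the cyclic chain into a linear chain of Baumslag--Solitar amalgams, which has an obvious action on a tree without a global fixed point. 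I would then check conditions 2 and 3 explicitly with exponent-sum homomorphisms.
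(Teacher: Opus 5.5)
There is a genuine gap, and it is already in Step 1, not only in the obstacle you flag at the end. If $Q$ is a set of relations in the $y_j$ (plus possibly extra generators) that on its own presents the trivial group, then $Q$ kills every $y_j$ whatever the other relations are. So deleting a linking relation $y_i^{-1}r_i$ leaves exactly $\langle X\mid R\setminus\{r_i\}\rangle$. That is the same group you get by deleting $r_i$ from the original presentation, and it can perfectly well have (FA). For example, in $\langle\sigma,\tau\mid\sigma^4,\tau^2,\tau^{-1}\sigma\tau\sigma^{-3}\rangle$, dropping $\sigma^4$ gives the finite group $\mathbb{Z}_8\rtimes\mathbb{Z}_2$. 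Your suggested repairs do not help:
\begin{itemize}
\item A relation set presenting the trivial group cannot have zero exponent sum in $y_i$ in every relation, since that would give a surjection $\langle y\mid Q\rangle\to\mathbb{Z}$.
\item Companion generators $t_i$ inside $Q$ are killed by $Q$ as well.
\item Changing the linking relations to $y_i^{-1}r_iw_i$ with fresh generators breaks condition 1, unless you add further relations killing those generators. Those relations then need their own essentiality argument.
\end{itemize}
You end by restating the obstacle rather than resolving it, so no working construction is given.

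The missing idea is to stop separating the linking role from the trivialising role: make the relator word $r$ itself one member of Neumann's pair. Replace each $r\in R$ by the two relations $r^{-1}b_rr=b_r^2$ and $b_r^{-1}rb_r=r^2$, with one fresh generator $b_r$. Together these force $r=b_r=1$, so the group is still $G$, but neither relation alone forces $r=1$.
\begin{itemize}
\item Deleting $r^{-1}b_rr=b_r^2$ gives a surjection onto $\mathbb{Z}$, via $X\mapsto 1$ and $b_r\mapsto t$.
\item Deleting $b_r^{-1}rb_r=r^2$ gives the amalgam $H_r*_{\langle r\rangle=\langle x\rangle}\langle x,b_r\mid x^{-1}b_rx=b_r^2,\ x^k=1\rangle$. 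Here $H_r=\langle X\mid R\setminus\{r\}\rangle$, $k$ is the order of $r$ in $H_r$, and you omit $x^k=1$ if $k=\infty$.
\end{itemize}
Making this amalgam non-trivial needs two hypotheses absent from your plan. First, start from an irredundant presentation, so that $k>1$ and $\langle x\rangle$ has index $2^k-1\ge 3$ (or infinite index) in the second factor. Second, treat cyclic $G$ separately via $\langle x\mid x^n\rangle$; then $H_r\neq\langle r\rangle$, because $G$ is a quotient of $H_r$ and would otherwise be cyclic.
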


This implies Theorem \ref{Thm:Main}, since a finite group $G$ clearly has Property (FA). Hence, it admits a finite just-(FA) presentation, which is therefore just-finite.

We also note the following consequence, concerning groups $G$ with Kazhdan's Property (T) \cite{Kazhdan}. We say that a presentation $\langle X \, | \, R \rangle$ for such a group is \emph{just-(T)} if, for every $r \in R$, the group $\langle X \, | \, R \setminus \{r\} \rangle$ does not have Property (T). Since Property (T) implies Property (FA) for countable discrete groups \cite{Watatani}, Theorem \ref{Thm:JustFA} gives the following result.

\begin{theorem}
\label{Thm:JustT}
Every finitely presented group with Property (T) admits a finite just-(T) presentation.
\end{theorem}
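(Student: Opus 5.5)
The plan is to deduce Theorem~\ref{Thm:JustT} directly from Theorem~\ref{Thm:JustFA}, using Watatani's theorem \cite{Watatani} that Property (T) implies Property (FA) for countable discrete groups. The argument is a pair of implications, and no new construction should be needed.

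First, let $G$ be a finitely presented group with Property (T). Since $G$ is finitely generated, it is countable, so by \cite{Watatani} it has Property (FA). Hence $G$ is a finitely presented group with Property (FA), and Theorem~\ref{Thm:JustFA} provides a finite presentation $\langle X \, | \, R \rangle$ of $G$ that is just-(FA).

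Second, I will check that this same presentation is just-(T). Fix $r \in R$ and put $H = \langle X \, | \, R \setminus \{r\} \rangle$. The just-(FA) property says that $H$ does not have Property (FA). Since $X$ is finite, $H$ is finitely generated and hence countable. Therefore the contrapositive of Watatani's theorem applies to $H$: if $H$ had Property (T), it would have Property (FA), which is false. So $H$ does not have Property (T). As $r$ was arbitrary, $\langle X \, | \, R \rangle$ is a finite just-(T) presentation of $G$.

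I do not expect a genuine obstacle, since all the substance lies in Theorem~\ref{Thm:JustFA}, which may be assumed. The only point needing care is that Watatani's theorem requires countability. It must be invoked twice: once for $G$, to enter the hypotheses of Theorem~\ref{Thm:JustFA}, and once for each group $H$ obtained by deleting a relation. Both uses are justified because every group involved is generated by the finite set $X$.
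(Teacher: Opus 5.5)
Your proposal is correct and is exactly the paper's argument: Theorem~\ref{Thm:JustT} is deduced from Theorem~\ref{Thm:JustFA} via Watatani's theorem that Property (T) implies Property (FA) for countable discrete groups. That theorem is used once for $G$ and once, contrapositively, for each group obtained by deleting a relation. Your explicit check that every group involved is generated by the finite set $X$, and hence countable, just spells out what the paper leaves implicit.
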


\section{The Construction}

The aim is to start with a finite presentation $\langle X \, | \, R \rangle$ of the given group $G$ with Property (FA) and modify it to a just-(FA) presentation. Each relation will be substituted for two new relations that have a form based on the following lemma.

\begin{lemma}[B.H. Neumann \cite{Neumann}]
\label{lem:neumann}
Let $H$ be a group with elements $u, v \in H$ satisfying $u^{-1}vu = v^2$ and $v^{-1}uv = u^2$. Then $u=1$ and $v=1$.
\end{lemma}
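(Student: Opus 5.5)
We need to prove: if $u^{-1}vu=v^2$ and $v^{-1}uv=u^2$, then $u=v=1$. The plan is a short computation: from the two relations, express powers of $u$ and $v$ in terms of each other until one of them is forced to be trivial.

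First I rewrite the relations. The first gives $vu = uv^2$. The second gives $uv = vu^2$.

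Substituting the second into the first:
\[
vu = (uv)v = vu^2 v .
\]
Cancelling $v$ on the left yields $u = u^2 v$, so $v = u^{-1}$. With $v=u^{-1}$, $u$ and $v$ commute. Then the relation $u^{-1}vu=v^2$ becomes $v=v^2$, hence $v=1$, and therefore $u=v^{-1}=1$.

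I should double-check the substitution step, since it is the only place an error could hide. From $vu = uv\cdot v$ and $uv = vu^2$ we get $vu = vu^2v$, and left cancellation of $v$ gives $u = u^2 v$, as claimed. No other step carries real content, so the argument is a direct two-line manipulation; the only care needed is keeping left and right multiplication straight when rewriting the conjugation relations.
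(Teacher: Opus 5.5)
Your proof is correct and is essentially the paper's argument. The paper gets $u=v^{-1}$ by evaluating the commutator $u^{-1}v^{-1}uv$ once with each relation (obtaining $u$ and $v^{-1}$), whereas you substitute $uv=vu^2$ into $vu=uv^2$ and cancel; both then finish the same way, substituting into $u^{-1}vu=v^2$ to get $v=v^2$.
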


\begin{proof}
From the second relation, $v^{-1}uv = u^2$, we can write the commutator $[u, v] = u^{-1}(v^{-1}uv) =
u^{-1}u^2 = u$. From the first relation, $u^{-1}vu = v^2$, we can invert it to get $u^{-1}v^{-1}u =
v^{-2}$. We can then write the commutator as $[u, v] = (u^{-1}v^{-1}u)v = v^{-2}v = v^{-1}$. Equating
the two expressions for the commutator, we have $u = v^{-1}$. Substituting this into the first relation
$u^{-1}vu = v^2$ yields $(v^{-1})^{-1}v(v^{-1}) = v^2$, which simplifies to $v = v^2$. This implies $v = 1$, and therefore
$u = 1$.
\end{proof}

The following type of semi-direct product of cyclic groups plays a technical role in our proof.

\begin{lemma}
\label{Lem:SemiDirect}
For $k > 1$, the presentation $\langle x, y \, | \, x^{-1} y x = y^2, x^k = 1 \rangle$ specifies the group $\mathbb{Z}_{2^k-1} \rtimes \mathbb{Z}_k$, where $\langle x \rangle = \mathbb{Z}_k$ and $\langle y \rangle = \mathbb{Z}_{2^k-1}$ and the conjugation action of $x$ on $\langle y \rangle$ is $y \mapsto y^2$.
\end{lemma}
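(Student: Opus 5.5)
The plan is to compare the group $P = \langle x, y \mid x^{-1}yx = y^2,\ x^k = 1\rangle$ with the semidirect product $S = \mathbb{Z}_{2^k-1} \rtimes \mathbb{Z}_k$. I will first construct $S$ explicitly, then produce a surjection $P \to S$, and finally bound $|P| \le k(2^k-1)$. The surjection will then be an isomorphism.

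\emph{Construction of $S$.} Let $\phi$ be the map on $\mathbb{Z}_{2^k-1}$ given by multiplication by $2$. Since $2$ is coprime to $2^k-1$, $\phi$ is an automorphism. Since $2^k \equiv 1 \pmod{2^k-1}$, we have $\phi^k = \mathrm{id}$. Hence $\phi$ defines a homomorphism $\mathbb{Z}_k \to \mathrm{Aut}(\mathbb{Z}_{2^k-1})$, and $S$ is the corresponding semidirect product, of order $k(2^k-1)$.

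\emph{Surjection $P \to S$.} Send $x$ to the generator of $\mathbb{Z}_k$ and $y$ to the generator of $\mathbb{Z}_{2^k-1}$. The semidirect product structure makes both defining relations of $P$ hold in $S$, so this is a homomorphism. It is onto because the images generate $S$.

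\emph{Upper bound on $|P|$.} Iterating the first relation gives $x^{-j} y x^{j} = y^{2^j}$ for all $j \ge 0$. Taking $j = k$ and using $x^k = 1$ gives $y = y^{2^k}$, that is, $y^{2^k-1} = 1$.

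The relation can be rewritten as $yx = xy^2$. This lets us move every $y$ to the right of every $x$. To handle negative powers of $x$, note that $x^{-1} = x^{k-1}$, so every element is a positive word in $x$ and $y$. Also, conjugation by $x$ restricts to an automorphism of $\langle y\rangle$, so $\langle y\rangle$ is normalized by $x$ and hence is normal in $P$. Therefore $P = \langle x\rangle\langle y\rangle$, with $|\langle x\rangle| \le k$ and $|\langle y\rangle| \le 2^k-1$, which gives $|P| \le k(2^k-1)$.

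Combining this bound with the surjection, the map $P \to S$ is a bijection, hence an isomorphism. The orders of $x$ and $y$ are then exactly $k$ and $2^k-1$, and the action is $y \mapsto y^2$ as claimed.

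The only point needing any care is the step showing $\langle y\rangle$ is normal in $P$. It needs the observation that conjugation by $x^{-1}$ also preserves $\langle y\rangle$. This holds because $x^{-1}$ is a positive power of $x$. Equivalently, $y \mapsto y^2$ is invertible on the finite cyclic group $\langle y\rangle$.
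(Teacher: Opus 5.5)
Your proof is correct and follows essentially the same route as the paper: a surjection onto $\mathbb{Z}_{2^k-1} \rtimes \mathbb{Z}_k$, combined with the bound $|P| \le k(2^k-1)$ that comes from $y^{2^k-1}=1$ and the normal form $x^m y^n$. You also justify that normal form, via normality of $\langle y\rangle$ and $x^{-1}=x^{k-1}$, which the paper simply asserts.
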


\begin{proof}
There is a homomorphism $\langle x, y \, | \, x^{-1} y x = y^2, x^k = 1 \rangle \rightarrow \mathbb{Z}_{2^k-1} \rtimes \mathbb{Z}_k$, since the two relations are satisfied in the image group. It is surjective because the images of $x$ and $y$ generate $\mathbb{Z}_{2^k-1} \rtimes \mathbb{Z}_k$. Any element in $\langle x, y \, | \, x^{-1} y x = y^2, x^k = 1 \rangle$ can be written as $x^m y^n$ for some integers $m$ and $n$. Using $x^k = 1$, we may assume $0 \leq m < k$. Conjugating $y$ by $x$ a total of $k$ times gives $y^{2^k}$. Since $x^k = 1$, we deduce that $y^{2^k} = y$. So, we may also assume $0 \leq n < 2^{k}-1$. So $\langle x, y \, | \, x^{-1} y x = y^2, x^k = 1 \rangle$ has at most $k(2^k -1)$ elements. However, $\mathbb{Z}_{2^k-1} \rtimes \mathbb{Z}_k$ has exactly this many elements, and therefore the above surjective homomorphism is an isomorphism.
\end{proof}

We are now in a position to prove our main theorem.

\begin{proof}[Proof of Theorem \ref{Thm:JustFA}]
Let $G$ be a finitely presented group with Property (FA). If $G$ is cyclic, $G$ must be a finite cyclic group $\mathbb{Z}_n$ and so admits the balanced, just-(FA) presentation $\langle x \, | \, x^n = 1 \rangle$.

Assume $G$ is non-cyclic. Let $\langle X \, | \, R \rangle$ be an irredundant finite presentation for $G$ (that is, for any $r \in R$, the group $H_r = \langle X \, | \, R \setminus \{r\} \rangle$ is not isomorphic to $G$, implying $r \neq 1$ in $H_r$). 

We define a new presentation $P' = \langle X' \, | \, R' \rangle$ as follows:
\begin{itemize}
    \item $X' = X \cup \{b_r \, | \, r \in R\}$
    \item $R' = \{r^{-1} b_r r = b_r^2 \, | \, r \in R\} \cup \{b_r^{-1} r b_r = r^2 \, | \, r \in R\}$
\end{itemize}
By Lemma \ref{lem:neumann}, these relations force $b_r = 1$ and $r = 1$ for all $r \in R$. (See Section \ref{Sec:Examples} for a specific example of such a presentation $P'$.)

We claim that $P'$ presents $G$. Consider what happens to the group $G = \langle X \, | \, R \rangle$ when a new generator $b_r$ is added, the relation $r$ is removed and then replaced by 
the two new relations $r^{-1} b_r r = b_r^2$ and $b_r^{-1} r b_r = r^2$. These force $b_r = 1$ and $r = 1$. Conversely $b_r = 1$ and $r = 1$ imply the two new relations 
$r^{-1} b_r r = b_r^2$ and $b_r^{-1} r b_r = r^2$. Thus, instead of adding these two new relations, we could have simply set $b_r = 1$ and $r = 1$. In other words, the new group is obtained from $G = \langle X \, | \, R \rangle$ by adding the new generator $b_r$ and a new relation $b_r = 1$. Hence, this does not change the group. Repeating this for each relation in $R$, we deduce that $P'$ does present $G$.

We must show $P'$ is just-(FA). Let $r \in R$. We show that removing either of the corresponding relations yields a group without Property (FA).

\textbf{Case 1: Remove $b_{r}^{-1} r b_{r} = r^2$.}
Let $K_1$ be the resulting group. As above, let $H_r$ be the group $\langle X \, | \, R \setminus \{ r \} \rangle$. This also has a presentation obtained from $P'$ by removing the generator $b_r$ and the relations  $r^{-1} b_{r} r = b_{r}^2$ and $b_r^{-1} r b_r = r^2$. Let $k \in \mathbb{N} \cup \{ \infty \}$ be the order of $r$ in $H_r$. Since $R$ is irredundant, $k > 1$. Thus $K_1$ can be decomposed as an amalgamated free product:
\[ K_1 \cong H_{r} *_{\langle r \rangle = \langle x \rangle} B' \]
where $B' = \langle x, b_{r} \, | \, x^{-1} b_{r} x = b_{r}^2, x^k = 1 \rangle$. When $k = \infty$, we do not include the relation $x^k = 1$. Thus, when $k = \infty$, $B'$ is the Baumslag-Solitar group $B(1,2)$. When $k$ is finite,
Lemma \ref{Lem:SemiDirect} gives that the group $B'$ is the semi-direct product $\mathbb{Z}_{2^k-1} \rtimes \mathbb{Z}_k$, and the index of the amalgamating subgroup $\langle x \rangle$ in $B'$ is $2^k - 1$. Thus, when $k = \infty$ or $k \ge 2$, this index is at least $3$.

An amalgamated free product $A *_C B$ is non-trivial when $C \not= A$ and $C \not= B$.
Since $[B' : \langle x \rangle] \ge 3 > 1$, $K_1$ is a non-trivial amalgamated free product unless $H_{r} = \langle r \rangle$.
In this case, $H_{r}$ is a cyclic group. However, $G$ is the quotient of $H_{r}$ by the normal closure of $r$. Since any quotient of a cyclic group is cyclic, this contradicts the assumption that $G$ is non-cyclic. Therefore, $H_{r} \neq \langle r \rangle$, and $K_1$ is a non-trivial amalgamated free product.

\textbf{Case 2: Remove $r^{-1} b_{r} r = b_{r}^2$.}
Let $K_2$ be the resulting group. The remaining relation is $b_{r}^{-1} r b_{r} = r^2$.
We define a homomorphism $\phi: K_2 \to \mathbb{Z} = \langle t \rangle$ by setting $\phi(x)=1$ for all $x \in X$, $\phi(b_{r'})=1$ for $r' \neq r$, and $\phi(b_{r})=t$. Note that since $X$ maps to the identity, all original relations in $R \setminus \{r\}$ are trivially satisfied. Under this map, the remaining relation maps to $t^{-1} \cdot 1 \cdot t = 1$. This homomorphism is well-defined and maps onto the infinite group $\mathbb{Z}$. Hence, $K_2$ does not have Property (FA).

Thus, removing any relation from $P'$ yields a group without Property (FA), and so $P'$ is a just-(FA) presentation for $G$.
\end{proof}

We remark, as a consequence of the above proof, the following result. Recall that the \emph{deficiency} of a presentation $\langle X \, | \, R \rangle$ is $|X| - |R|$.

\begin{theorem}
Let $P$ be any finite irredundant presentation of a non-cyclic group $G$ with Property (FA). Then $G$ admits a finite, just-(FA) presentation $P'$ where the deficiency of $P'$ is equal to that of $P$, and where the generators of $P'$ represent the same elements of $G$ that the generators of $P$ did, plus multiple copies of the identity element.
\end{theorem}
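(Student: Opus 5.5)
The plan is to reuse, essentially verbatim, the construction $P'$ from the proof of Theorem \ref{Thm:JustFA}. We apply it to the given irredundant presentation $P = \langle X \, | \, R \rangle$ rather than to an arbitrarily chosen one. The point is that the proof of Theorem \ref{Thm:JustFA} never used anything about $\langle X \, | \, R\rangle$ beyond two facts: it is finite and irredundant, and $G$ is non-cyclic with Property (FA). So the first step is to observe that, with $P$ given, the presentation $P' = \langle X' \, | \, R' \rangle$ defined there has $X' = X \cup \{ b_r \mid r \in R\}$ and $R'$ consisting of the two relations $r^{-1} b_r r = b_r^2$ and $b_r^{-1} r b_r = r^2$ for each $r \in R$.

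Next I verify the three asserted properties in turn.

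\textbf{$P'$ is a finite just-(FA) presentation of $G$.} That $P'$ presents $G$ follows exactly as before, via Lemma \ref{lem:neumann}. For each $r \in R$, adjoining $b_r$ and replacing $r$ by the two Neumann-type relations is equivalent to adjoining $b_r$ together with the relation $b_r = 1$. Just-(FA) also follows verbatim from the two cases in the proof of Theorem \ref{Thm:JustFA}.
\begin{itemize}
\item In Case 1, irredundancy of $P$ gives that $r$ has order $k>1$ in $H_r$. Then Lemma \ref{Lem:SemiDirect} gives the index bound $[B':\langle x\rangle]\ge 3$. Non-cyclicity of $G$ rules out $H_r=\langle r\rangle$, so the amalgamated product is non-trivial.
\item In Case 2, the map to $\mathbb{Z}$ sending $b_r\mapsto t$ and everything else to $1$ works as before.
\end{itemize}
Finiteness of $P'$ is immediate.

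\textbf{Deficiency.} This is a counting step. We have $|X'| = |X| + |R|$, since the $b_r$ are distinct new letters, one per relation. We also have $|R'| = 2|R|$, since each $r$ contributes exactly two relations. Hence $|X'|-|R'| = |X| - |R|$. One small point needs checking: the two relations attached to a given $r$ must not coincide as words, and relations for different $r$ must not collide. This holds because they involve distinct letters $b_r$ and have different shapes.

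\textbf{Generators.} Under the isomorphism from $P'$ to $G$ established above, each $x \in X$ maps to the same element of $G$ as it did under $P$. This is because the identification was obtained by adjoining generators and relations to $\langle X \, | \, R\rangle$ without altering $X$. Each $b_r$ maps to the identity by Lemma \ref{lem:neumann}, which supplies the $|R|$ copies of the identity element.

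I expect no real obstacle. The only subtle step is confirming that the proof of Theorem \ref{Thm:JustFA} genuinely works for an arbitrary prescribed irredundant presentation, with no hidden choice. Irredundancy is used only to ensure $k>1$ in Case 1, and non-cyclicity only to exclude $H_r = \langle r\rangle$. Both are hypotheses here.
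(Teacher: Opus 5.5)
Your proposal is correct and is exactly the paper's argument: the paper states this theorem as an immediate remark, obtained by applying the construction from the proof of Theorem~\ref{Thm:JustFA} to the given irredundant presentation $P$, so that $|X'|-|R'| = (|X|+|R|)-2|R| = |X|-|R|$, the letters of $X$ keep their images, and each $b_r$ is trivial in $G$ by Lemma~\ref{lem:neumann}. Your extra check that the $2|R|$ relators are pairwise distinct (for instance, their exponent sums in $b_r$ are $-1$ and $0$) is a small detail the paper leaves implicit.
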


\section{Examples}
\label{Sec:Examples}
The following is a presentation of the dihedral group of order 8:
$$\langle \sigma, \tau \, | \, \sigma^4 = 1, \tau^2 = 1, \tau^{-1} \sigma \tau = \sigma^3 \rangle.$$
When the first relation is removed, this gives a semi-direct product $\mathbb{Z}_{8} \rtimes \mathbb{Z}_2$, by an argument analogous to the one in Lemma \ref{Lem:SemiDirect}. When the second relation is removed, the resulting group admits a surjective homomorphism onto $\mathbb{Z} = \langle t \rangle$, sending $\sigma$ to $1$ and $\tau$ to $t$. When the third relation is removed, we have the free product $\mathbb{Z}_4 \ast \mathbb{Z}_2$. So this presentation is irredundant but not just finite. When we apply the above procedure to it, we obtain the just finite presentation
$$\left \langle \sigma, \tau, a, b, c \;\middle|\;
\begin{array}{c} \sigma^{-4} a \sigma^4 = a^2 , a^{-1} \sigma^4 a = \sigma^8, \\
\tau^{-2} b \tau^2 = b^2 , b^{-1} \tau^2 b = \tau^4, \\
(\tau^{-1} \sigma \tau \sigma^{-3})^{-1} c (\tau^{-1} \sigma \tau \sigma^{-3}) = c^2,\\
c^{-1} (\tau^{-1} \sigma \tau \sigma^{-3}) c= (\tau^{-1} \sigma \tau \sigma^{-3})^2 
\end{array}
\right \rangle.$$
However, the following presentation for the dihedral group of order 8 is already just finite:
$$\langle \sigma, \tau \, | \, \sigma^4 = 1, \tau^2 = 1, \tau^{-1} \sigma \tau = \sigma^{-1} \rangle.$$
Removing the first relation gives the infinite dihedral group.

\section{Methodology}
This paper was produced using the AI co-mathematician tool \cite{Zheng} developed by Google DeepMind. The author posed Problem 21.10 from the Kourovka Notebook to the AI co-mathematician. AI co-mathematician proposed a solution, which was essentially the proof given in this paper, but then found a flaw in its argument which it was unable to resolve. The issue was that it was uncertain how to proceed in the case where $H_r = \langle r \rangle$  in Case 1 of the proof of Theorem \ref{Thm:JustFA}. By analysing the argument carefully, the author was able to find a fix for it, which the AI co-mathematician then confirmed was correct and wrote up. The author then made some minor edits and clarifications. The author then introduced the notions of just-(FA) and just-(T) presentations, and made the necessary minor adjustments to the proofs, resulting in this paper. The paper was then checked and corrected by the AI co-mathematician tool.

\bibliography{just-finite-biblio}
\bibliographystyle{plain}

\end{document}